\documentclass[12pt]{article}
\usepackage{graphicx}
\usepackage{multicol,multirow}
\usepackage{amsmath,amssymb,amsfonts}
\usepackage{mathrsfs}
\usepackage{amsthm}
\usepackage{rotating}
\usepackage{appendix}
\usepackage[numbers]{natbib}
\usepackage{ifpdf}
\usepackage{xcolor}
\usepackage[colorlinks,allcolors=blue]{hyperref}
\usepackage{hyperref}
\usepackage{color}
\usepackage{amssymb,amsmath}
\usepackage{graphicx}
\usepackage{orcidlink}
\setlength{\textwidth}{6.0in} \setlength{\textheight}{8.5in}
\setlength{\topmargin}{-0.2cm} \setlength{\evensidemargin}{0.15in}
\setlength{\oddsidemargin}{0.15in}

\newtheorem{theorem}{Theorem}
\newtheorem{lemma}{Lemma}
\newtheorem{corollary}{Corollary}

\newtheorem{proposition}{Proposition}

\newtheorem{remark}{Remark}

\title{\bf On  generalized Stirling numbers and special functions  }
\author{ Kamel  Mezlini $^{a\orcidlink{0000-0001-6983-1150}}$ \& Tahar Moumni $^*$ $^{b\orcidlink{0009-0009-3853-9603}}$ \& Najib Ouled Azaiez $^{c\orcidlink{0000-0002-0082-4611}}$
}

\begin{document}
 \noindent \maketitle
        \begin{center}
            $^a$ Department of Mathematics, Faculty of Sciences of Tunisia,  University of Elmanar, 2092 Tunis El-Manar, Tunisia.\\
            $^b$ Department of Mathematics, Faculty of Sciences of Bizerte,  University of Carthage, 7021 Jarzouna,  Tunisia.\\
            $^{c}$ Department of Mathematics, Faculty of Sciences of Sfax,  University of Sfax, B.P 1171, Sfax 3000 Tunisia\\
            $^*$ Corresponding Author: tahar.moumni@issatm.ucar.tn
        \end{center}

\vskip 0.5cm

\begin{abstract}
 We introduce a new generalization of Stirling numbers of the second kind and analyze their properties, including generating functions, integral representations, and recurrence relations. These numbers are used to approximate Riemann zeta values by rationals with exponentially decreasing error. We establish connections with Hurwitz zeta functions, polylogarithms, harmonic sums, and multiple sums. Finally, we extend our study to q-Stirling numbers, linking them to q-hypergeometric functions and a q-zeta function, revealing new insights in combinatorics and number theory.
\end{abstract}
{\bf Keywords:}
Generalized Stirling numbers,  Riemann zeta function,  Hurwitz zeta function, Polylogarithm, q-Stirling numbers,  q-Riemann zeta functions.

\noindent{\textbf{Mathematics Subject Classification}. 11B73, 11M35, 33D60, 11G55, 33D15.}

\section{Introduction}

Stirling numbers of the first kind $s(n, k)$ and of the second kind $S(n, k)$ frequently appear in combinatorics and analysis. They are defined by the following generating functions (see [\cite{Comtet}, Chapter 5]):

\begin{equation}
    \frac{\ln^k(1 + t)}{k!} = \sum_{n=k}^{\infty} s(n, k) \frac{t^n}{n!}, \quad |t| < 1, \quad k = 0, 1, 2, \dots
\end{equation}

\begin{equation}
    \frac{(e^t - 1)^k}{k!} = \sum_{n=k}^{\infty} S(n, k) \frac{t^n}{n!}, \quad t \in \mathbb{R}, \quad k = 0, 1, 2, \dots
\end{equation}

They have been extensively studied by several renowned mathematicians, including Euler (1755), Laplace (1812), and Cayley (1888). Stirling numbers were  generalized by numerous mathematicians due to their wide range of applications in combinatorics, number theory, and mathematical analysis. For example, they played a crucial role in providing a representation of the Riemann zeta function, as demonstrated in \cite{Butzer91}. In 2007, Everitt et al. introduced the so-called Jacobi-Stirling numbers of the first and second kind (see \cite{Evritt07}), further enriching their theoretical significance. More recently, Nenad P. et al. have extended the study of Jacobi-Stirling numbers, broadening their applicability (see \cite{Gomaa}).

In this paper, we introduce a new generalization of Stirling numbers of the second kind and analyze their fundamental properties, including their horizontal and vertical generating functions, integral representations, and recurrence relations. We then establish a connection between these numbers and certain special functions, such as the \textbf{Hurwitz zeta function} and \textbf{polylogarithm functions}. Special attention is given to the approximation of Riemann zeta function values using sequences of rational numbers, where we show that the error in this approximation decreases exponentially (see Theorem 7).

Furthermore, we investigate the links between these generalized numbers and various combinatorial sums, such as \textbf{harmonic sums} and \textbf{multiple sums}. Finally, we explore a natural extension of the generalized Stirling numbers within the framework of \textbf{q-deformation}, thereby establishing new connections with q-deformed hypergeometric functions and q-generalized zeta functions.

The structure of this paper is as follows. \textbf{Section 2} introduces the definition of generalized Stirling numbers of the second kind and their analytical properties. \textbf{Section 3} establishes a relationship between these numbers and the Riemann zeta function, proposing an efficient approximation of its values at integers. \textbf{Sections 4 and 5} focus on the series expansions of the \textbf{Hurwitz zeta function} and \textbf{polylogarithm functions}. \textbf{Sections 6 and 7} respectively explore the connections with \textbf{harmonic sums} and \textbf{multiple sums}. Finally, \textbf{Section 8} is dedicated to the q-deformed generalization of Stirling numbers, illustrating their connection with hypergeometric series and q-generalized zeta functions.

\section{Generalized Stirling numbers of the second type }\label{GSN}
The generalized Stirling numbers of the second kind are defined by the following:  
\begin{equation}\label{stir-expl-1}
S_n^p=\frac{(-1)^n}{n!}\sum_{k=0}^{n}\frac{(-1)^{k}\binom{n}{k}}{(k+1)^p},\;\;n=0,1,2,...,p=1,2,....
\end{equation}
\begin{theorem}\label{lemma2} The horizontal generating function associated to the generalized Stirling numbers of the second type $ S_n^p$  is given by:
\begin{equation}\label{part1-1}
e^{-t}{}{ }_{p} F_{p}\left[1,...,1;2,...,2 ; t\right]= \sum_{n=0}^{+\infty}
S_n^pt^{n},\;t\in\mathbb{R}.
\end{equation}
\end{theorem}
\begin{proof}
Recall from \cite{Bailey}, that  the generalized hypergeometric
function is defined by:
 
\begin{equation}\label{fpq}
{ }_{p} F_{q}\left[a_{1}, a_{2}, \ldots, a_{p}; b_{1}, b_{2},
\ldots, b_{q} ; z \right]=\sum_{k=0}^{\infty}
\frac{\prod_{m=1}^{p}\left(a_{m}\right)_{k}
z^{k}}{\prod_{m=1}^{q}\left(b_{m}\right)_{k} k !},
\end{equation}
where $(c)_{n}$ is the shifted factorial  defined by
$$
(c)_{0}=1 \quad \text { and } \quad(c)_{n}=c(c+1) \cdots(c+n-1)
\quad \text { for } n=1,2, \ldots
$$
Using  power series expansion of both functions on the left hand side of
(\ref{part1-1}), we get:
\begin{eqnarray*}
 e^{-t}{ }_{p} F_{p}\left[1,...,1;2,...,2 ; t\right]
  &=& \left( \sum_{n=0}^{+\infty}\frac{(-1)^nt^n}{n!}\right) \left( \sum_{k=0}^{+\infty}\frac{t^k}{k!(k+1)^p}\right)  \\
  &=&  \sum_{n=0}^{+\infty}\frac{(-1)^n}{n!}\sum_{k=0}^{+\infty}\frac{t^{n+k}}{k!(k+1)^p} \\
&=&  \sum_{n=0}^{+\infty} ( \sum_{k=0}^{n}\frac{(-1)^{n-k}}{(n-k)!k!(k+1)^p} ) t^n \\
 &=& \sum_{n=0}^{+\infty}S_n^pt^n.
\end{eqnarray*}
\end{proof}
Using the fact that $\displaystyle{\binom{n}{k-1}=\frac{k}{n+1} \binom{n+1}{k},}$
the generalized Stirling numbers can be written as
\begin{equation*}
  S_n^p=\frac{(-1)^{n+1}}{(n+1)!}\sum_{k=1}^{n+1}\frac{(-1)^{k}\binom{n+1}{k}}{k^{p-1}}=S(1-p,n+1),
\end{equation*}
where $S(\alpha,n)$ is the Stirling number of negative order defined by  Butzer, Kilbas and Trujillo, see \cite{Butzer}.\\
Using (\ref{fpq}), the  $S_n^p $ numbers can be  expressed as special values of a  hypergeometric function:
\begin{equation*}
  S_n^p=\frac{(-1)^{n}}{n!}{ }_{p+1} F_{p}\left[-n,1,...,1;2,...,2 ; 1\right].
\end{equation*}
\begin{theorem}
The generalized Stirling numbers of the second kind
 have the integral representation:
 \begin{equation}\label{integral-rep}
 S_n^p=\frac{(-1)^n}{n!(p-1)!}\int_{0}^{+\infty}(1-e^{-t})^n e^{-t}t^{p-1}dt,\;\;\;n=0,1,2,...,\;p=1,2,3,....
 \end{equation}
 \end{theorem}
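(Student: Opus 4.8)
The plan is to start from the explicit formula (\ref{stir-expl}) and replace each term $1/(k+1)^p$ by its Eulerian integral representation. Recall that for any integer $m\ge 1$ and any real $a>0$ one has the classical identity
\begin{equation*}
\frac{1}{a^{p}}=\frac{1}{(p-1)!}\int_{0}^{+\infty}e^{-at}t^{p-1}\,dt,
\end{equation*}
obtained from the definition of the Gamma function by the substitution $t\mapsto t/a$; this integral converges absolutely since $p\ge 1$. Applying this with $a=k+1$ for $k=0,1,\dots,n$ turns the finite sum in (\ref{stir-expl}) into a sum of convergent integrals.

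Next I would pull the finite sum inside the integral sign — which is legitimate precisely because it is a \emph{finite} sum of integrable functions, so no analytic subtlety arises — to obtain
\begin{equation*}
S_n^p=\frac{(-1)^n}{n!(p-1)!}\int_{0}^{+\infty}e^{-t}t^{p-1}\left(\sum_{k=0}^{n}(-1)^k\binom{n}{k}e^{-kt}\right)dt.
\end{equation*}
The inner sum is evaluated by the binomial theorem: $\sum_{k=0}^{n}\binom{n}{k}(-e^{-t})^k=(1-e^{-t})^n$. Substituting this back yields exactly (\ref{integral-rep}).

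There is essentially no hard step here; the only points requiring a word of justification are the convergence of the Eulerian integral (immediate for $p\ge 1$) and the interchange of sum and integral (immediate because the sum is finite). An alternative route would be to integrate the generating function identity (\ref{part1}) term by term after multiplying by a suitable kernel, but that introduces uniform-convergence bookkeeping that the direct computation above avoids entirely, so I would present the direct argument.
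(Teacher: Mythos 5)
Your proposal is correct and follows exactly the paper's argument: substitute Euler's integral $\int_{0}^{+\infty}e^{-t(k+1)}t^{p-1}\,dt=(p-1)!/(k+1)^p$ into the explicit formula (\ref{stir-expl}), exchange the finite sum with the integral, and recognize the binomial expansion of $(1-e^{-t})^n$. The paper states this in one line; you have merely spelled out the (trivial) justifications, so there is nothing to add.
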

\begin{proof}
By substituting Euler's integral:
\begin{equation}\label{euler-int}
 \int_{0}^{+\infty}e^{-t(k+1)}t^{p-1}dt=\frac{(p-1)!}{(k+1)^p},
\end{equation}
 in (\ref{stir-expl-1}), the result (\ref{integral-rep}) follows.\end{proof}
\begin{remark}
From the integral representation (\ref{integral-rep}), we have the following properties:
\begin{equation}\label{snp-estm1-1}
\forall\;n\in\mathbb{N},\;\;|S_n^p|\le \frac{1
}{n!}\;\;\mbox{and}\;\; \lim_{n\rightarrow+\infty}n!S_n^p=0.
\end{equation}
\end{remark}

\begin{remark}
By change of variable $u=e^{-t}$ in (\ref{integral-rep}), we deduce
\begin{equation}\label{stir-ln2}
S_n^p=\frac{(-1)^{n+p-1}}{n!(p-1)!}\int_{0}^{1}(1-u)^n\ln(u)^{p-1}du=\frac{(-1)^{n+p-1}}{n!(p-1)!}\int_{0}^{1}u^n\ln(1-u)^{p-1}du.
\end{equation}
\end{remark}
Integration by parts of the integral in (\ref{integral-rep})
produces the recurrence for $S_{n}^{p}.$
\begin{theorem}
The generalized Stirling numbers of the second kind
satisfy the recurrence relation
\begin{equation*}
  (n+1)S_{n}^{p+1}=S_{n}^{p}-S_{n-1}^{p+1},\;n,\;p=1,2,3,...,
\end{equation*}
with
\begin{equation*}
S_{0}^{p}=1;\;S_{n}^{1}=\frac{(-1)^{n+1}}{(n+1)!}.
\end{equation*}
\end{theorem}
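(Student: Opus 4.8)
The plan is to carry out the hint literally: starting from the integral representation (\ref{integral-rep}), a single integration by parts yields the recurrence, provided we split the integrand so that all boundary terms vanish. It is convenient to set
\[
I_n^p:=\int_{0}^{+\infty}(1-e^{-t})^{n}\,e^{-t}\,t^{p-1}\,dt ,
\]
so that (\ref{integral-rep}) reads $I_n^p=(-1)^n\,n!\,(p-1)!\,S_n^p$. Rephrasing the desired recurrence in terms of the $I_n^p$, it is enough to prove the analytic identity
\[
(n+1)\,I_n^{p+1}=n\,I_{n-1}^{p+1}+p\,I_n^{p},\qquad n\ge 1,\ p\ge 1 ;
\]
the degenerate case $n=0$ is handled separately by $S_0^p=1$ (equivalently, with the convention $S_{-1}^{p}=0$).

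To establish the identity, apply integration by parts to $I_n^{p+1}=\int_0^{+\infty}\bigl[(1-e^{-t})^{n}t^{p}\bigr]\,e^{-t}\,dt$ with $dv=e^{-t}\,dt$, $v=-e^{-t}$, and $u=(1-e^{-t})^{n}t^{p}$. The boundary term $uv=-(1-e^{-t})^{n}t^{p}e^{-t}$ is $O(t^{n+p})$ as $t\to0^+$ and decays exponentially as $t\to+\infty$, so it contributes nothing. Differentiating $u$ then gives
\[
I_n^{p+1}=n\int_{0}^{+\infty}(1-e^{-t})^{n-1}e^{-2t}t^{p}\,dt+p\int_{0}^{+\infty}(1-e^{-t})^{n}e^{-t}t^{p-1}\,dt ,
\]
and the second integral is exactly $p\,I_n^{p}$. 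The one genuine idea is to rewrite $e^{-2t}=e^{-t}-e^{-t}(1-e^{-t})$ in the first integral; this expresses it as $n\bigl(I_{n-1}^{p+1}-I_n^{p+1}\bigr)$, and rearranging produces $(n+1)I_n^{p+1}=nI_{n-1}^{p+1}+pI_n^{p}$.

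Finally I would substitute back: using $I_n^{p+1}=(-1)^n n!\,p!\,S_n^{p+1}$, $I_{n-1}^{p+1}=(-1)^{n-1}(n-1)!\,p!\,S_{n-1}^{p+1}$ and $I_n^{p}=(-1)^n n!\,(p-1)!\,S_n^{p}$, and dividing the integral recurrence by $(-1)^n n!\,p!$, everything collapses to $(n+1)S_n^{p+1}=S_n^p-S_{n-1}^{p+1}$. The initial data come straight from the explicit formula (\ref{stir-expl}): setting $n=0$ gives $S_0^p=1$, and setting $p=1$ reduces to the elementary alternating binomial sum $\sum_{k=0}^{n}(-1)^k\binom nk/(k+1)=\int_0^1(1-x)^n\,dx=1/(n+1)$, which pins down $S_n^1$. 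I expect the only delicate point to be the choice of the integration-by-parts split: the factor $e^{-t}$ must be taken as $dv$ rather than absorbed into $d\!\left((1-e^{-t})^{n+1}\right)$, since the latter makes the boundary term at $+\infty$ diverge; once that is fixed, the identity $e^{-2t}=e^{-t}-e^{-t}(1-e^{-t})$ and the subsequent bookkeeping are routine. (Alternatively one could extract the same recurrence directly from (\ref{stir-expl}) via Pascal's rule, but the integral argument is shorter.)
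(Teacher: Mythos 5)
Your proof is correct and follows essentially the same route as the paper: integration by parts on the integral representation (\ref{integral-rep}) combined with the splitting $e^{-2t}=e^{-t}-e^{-t}(1-e^{-t})$ (which the paper writes as $(e^{-t}-1+1)e^{-t}$); the only difference is which factor you take as $dv$, which amounts to reading the same identity in the opposite direction. One minor remark: your check of the initial data correctly gives $S_n^1=(-1)^{n}/(n+1)!$, so the sign $(-1)^{n+1}$ in the theorem's stated initial condition appears to be a typo in the paper rather than a flaw in your argument.
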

\begin{proof} Integration by parts of the integral in
(\ref{integral-rep}) produces
\begin{equation}\label{snp-int-part}
S_{n}^{p}=  -\frac{(-1)^{n}}{(n-1)!p!}\int_{0}^{+\infty}(1-e^{-t})^{n-1}e^{-2t} t^{p}dt+\frac{(-1)^{n}}{n!p!}\int_{0}^{+\infty}(1-e^{-t})^{n}e^{-t} t^{p}dt.
\end{equation}
So,
\begin{equation}\label{sn-1p}
S_{n}^{p}=   -\frac{(-1)^{n}}{(n-1)!p!}\int_{0}^{+\infty}(1-e^{-t})^{n-1}(e^{-t}-1+1)e^{-t} t^{p}dt+S_{n}^{p+1}
\end{equation}
then $S_{n}^{p} $ can be expressed in the form
\begin{equation}\label{snpdiv1}
S_{n}^{p}=   \frac{(-1)^{n}}{(n-1)!p!}\int_{0}^{+\infty}(1-e^{-t})^{n}e^{-t} t^{p}dt-\frac{(-1)^{n}}{(n-1)!p!}\int_{0}^{+\infty}(1-e^{-t})^{n-1}e^{-t} t^{p}dt+S_{n}^{p+1}
\end{equation}
From (\ref{snpdiv1}) and (\ref{integral-rep}), we deduce that:
\begin{equation*}
  S_{n}^{p}=(n+1)S_{n}^{p+1}+S_{n-1}^{p+1}.
\end{equation*}
\end{proof}

Let us  consider the function
\begin{equation}\label{gnpoch}
 g_{n}(x)=\frac{(-1)^{n}}{(x)_{n+1}},\;x>0,\;\;\;n=0,1,2....
\end{equation}
The following proposition give us other expressions of $g_{n}$ and
its relation with the generalized sitirling numbers of the second
kind.
\begin{proposition}
\begin{enumerate}
\item
The integral representation of the  function $g_n$ is given by:
\begin{equation}\label{g-int-1}
g_{n}(x)=\frac{(-1)^{n}}{n!}\int_{0}^{+\infty}(1-e^{-t})^n e^{-xt}dt,\;\;x>0.
\end{equation}
\item The function $g_n$ can be expressed as
\begin{equation}\label{gnsnx}
 g_{n}(x)= \frac{(-1)^{n}}{n!}\sum_{k=0}^{n}\frac{(-1)^{k}\binom{n}{k}}{k+x}.
\end{equation}
\item The number $S_n^p$ can be expressed  in terms of values  of  derivatives of the function $g_n:$
\begin{equation}\label{deri-gn}
  S_n^p=\frac{(-1)^{p-1}}{(p-1)!} g_{n}^{(p-1)}(1),
\end{equation}
where  $g_n^{(p-1)}$ denotes the $(p-1)$-th derivative of $g_n$.
\end{enumerate}
\end{proposition}
\begin{proof}
\begin{enumerate}
\item Using \cite{andrews},   the change of variables $u=e^{-t} $ in  (\ref{g-int-1}),
leads to:
\begin{equation*}
\frac{(-1)^{n}}{n!}\int_{0}^{1}(1-u)^n u^{x-1}du=\frac{(-1)^n}{n!}\beta(x,n+1)=\frac{(-1)^{n}}{n!}\frac{\Gamma(x)\Gamma(n+1)}{\Gamma(x+n+1)}.
\end{equation*}
Since $ \Gamma(n+1)=n! $ and  $
(x)_{n+1}=\frac{\Gamma(x+n+1)}{\Gamma(x)}$, (\ref{g-int-1}) follows.
\item Using the binomial expansion  of $(1-e^{-t})^n$ in  (\ref{g-int-1})  and  the fact that
$ \displaystyle{\int_{0}^{+\infty}e^{-t(k+x)}dt=\frac{1}{k+x},}$
we deduce (\ref{gnsnx}), see \cite{Gould}, p.82.
\item
The proof of (\ref{deri-gn}) is based on  the $(p-1)-$time differentiation  of
(\ref{g-int-1}) with respect to $x.$ Then taking $x=1$ leads to:
\begin{equation*}
  g_{n}^{(p-1)}(1)=\frac{(-1)^{n+p-1}}{n!}\int_{0}^{+\infty}(1-e^{-t})^n e^{-t}t^{p-1}dt=(-1)^{p-1}(p-1)!S_n^p.
\end{equation*}
\end{enumerate}
\end{proof}

The following theorem borrowed from \cite{preprint-Kamel}, gives another generating function of the generalized Stirling numbers  of the second kind.
\begin{theorem}
The generalized Stirling numbers  of the second kind
can be defined by the following vertical generating function:
\begin{equation}\label{generatp-1}
  \frac{(-1)^{n}}{(1-t)_{n+1}}=\sum_{p=0}^{+\infty}S_n^{p+1}t^p,\; 0<t<1.
\end{equation}
\end{theorem}
\begin{proof} To prove (\ref{generatp-1}), we write $g_n(x) $ as a power
series in $(x-1)$. We use Taylor's theorem to write
\begin{equation*}
g_n(x)=  \sum_{p=0}^{+\infty}\frac{g_{n}^{(p)}(1)}{p!}(x-1)^p.
\end{equation*}
Hence, from equation (\ref{deri-gn}), we have
\begin{equation}\label{gn-stir}
g_n(x)=  \sum_{p=0}^{+\infty}S_n^{p+1}(1-x)^p.
\end{equation}
Putting $t=1-x$ in (\ref{gn-stir}), we get (\ref{generatp-1}).
\end{proof}

\section{The \texorpdfstring{$S_n^{p} $}- sequence and Riemann  zeta function}\label{REL}
In this section, we present a relation between the generalized Stirling numbers of the second kind and Riemann zeta functions.

\subsection{The \texorpdfstring{$S_n^{p}$}- sequence  and Riemann  zeta values }
To establish the relation between the $S_n^{p}$-sequence  and  zeta
values we recall first the following result established in
 \cite{preprint-Kamel}
\begin{theorem}
For $p\geq 2$ and for all $t \geq 0,$ we have
\begin{equation}\label{delta}
{}_pF_{p}\left[1,...,1;2,...,2; t \right]=e^{t} \sum_{n=0}^{+\infty}
\frac{(-1)^ns(n+p-1, p-1)}{(n+p-1)!t^{n+p}}\gamma(n+p,t).
\end{equation}
where $\gamma$ denoting the incomplete Gamma function.
\end{theorem}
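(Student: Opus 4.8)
The plan is to prove (\ref{delta}) by reducing both sides to the common integral
\[
I_p(t)=\frac{1}{(p-1)!}\int_{0}^{+\infty}u^{\,p-1}e^{-u}\,e^{t e^{-u}}\,du .
\]
For the left-hand side, the definition (\ref{fpq}) together with $(1)_k=k!$ and $(2)_k=(k+1)!$ gives ${}_pF_p[1,\ldots,1;2,\ldots,2;t]=\sum_{k\geq 0}t^{k}/(k!\,(k+1)^{p})$; substituting Euler's integral (\ref{euler-int}) for $(k+1)^{-p}$ and interchanging the (absolutely convergent) sum and integral produces exactly $I_p(t)$. This is the easy half, and essentially reproduces the integral underlying Theorem \ref{lemma2}.

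For the right-hand side, I would first write the incomplete Gamma function as $\gamma(n+p,t)=t^{\,n+p}\int_{0}^{1}w^{\,n+p-1}e^{-tw}\,dw$ (the substitution $u=tw$ in $\gamma(s,t)=\int_0^t u^{s-1}e^{-u}\,du$). Inserting this and exchanging summation with integration turns the right-hand side of (\ref{delta}) into
\[
e^{t}\int_{0}^{1}w^{\,p-1}e^{-tw}\left(\sum_{n=0}^{+\infty}\frac{(-1)^{n}s(n+p-1,p-1)}{(n+p-1)!}\,w^{n}\right)dw .
\]
The inner series is the heart of the matter: after the index shift $m=n+p-1$ it equals $(-1)^{1-p}w^{1-p}\sum_{m\geq p-1}s(m,p-1)(-w)^{m}/m!$, which by the first-kind generating function (\ref{stir-first}) (with $k=p-1$, evaluated at the point $-w\in(-1,0)$) is $(-1)^{1-p}w^{1-p}\ln^{p-1}(1-w)/(p-1)!$. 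Since $\ln(1-w)<0$ on $(0,1)$ we have $(-1)^{1-p}\ln^{p-1}(1-w)=(-\ln(1-w))^{p-1}$, so the bracket collapses to $(-\ln(1-w))^{p-1}/\big((p-1)!\,w^{p-1}\big)$ and the right-hand side becomes $\tfrac{e^{t}}{(p-1)!}\int_{0}^{1}e^{-tw}(-\ln(1-w))^{p-1}\,dw$. The change of variables $w=1-e^{-u}$ (so $-\ln(1-w)=u$, $dw=e^{-u}\,du$, $e^{-tw}=e^{-t}e^{t e^{-u}}$) turns this into $I_p(t)$, which finishes the argument.

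The genuinely delicate points — everything else is routine — are: (i) keeping correct track of the sign $(-1)^{1-p}$ and of the sign of $\ln(1-w)$ so that $(-1)^{1-p}\ln^{p-1}(1-w)$ comes out positive, and justifying the use of (\ref{stir-first}) at the negative argument $-w$ (allowed since $|w|<1$); and (ii) the two interchanges of $\sum$ and $\int$. For the one on the right-hand side the needed absolute convergence follows from $\gamma(n+p,t)\leq t^{\,n+p}/(n+p)$ together with $|s(n+p-1,p-1)|/(n+p-1)!=O\big((\log n)^{p-2}/n\big)$, which also shows the series in (\ref{delta}) converges absolutely and locally uniformly in $t$. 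Finally $t=0$ is not a real exception: both sides are continuous there (with $\gamma(n+p,t)/t^{\,n+p}\to 1/(n+p)$) and equal $1$.
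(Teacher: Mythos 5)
Your argument is correct, and it is worth noting at the outset that the paper itself offers no proof to compare against: the statement is quoted from \cite{preprint-Kamel} as an external result. Your proposal therefore supplies a self-contained verification, and a clean one: both sides are reduced to the integral $\frac{1}{(p-1)!}\int_0^\infty u^{p-1}e^{-u}e^{te^{-u}}\,du$, the left side via $(1)_k=k!$, $(2)_k=(k+1)!$ and Euler's integral (\ref{euler-int}) (the interchange is by Tonelli, all terms being nonnegative for $t\ge 0$), and the right side via $\gamma(n+p,t)=t^{n+p}\int_0^1 w^{n+p-1}e^{-tw}\,dw$, the generating function (\ref{stir-first}) at the argument $-w\in(-1,0)$, and the substitution $w=1-e^{-u}$. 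I checked the sign bookkeeping: $(-1)^{1-p}=(-1)^{p-1}$, so $(-1)^{1-p}\ln^{p-1}(1-w)=(-\ln(1-w))^{p-1}>0$ on $(0,1)$ as you claim. The justification of the second interchange is also sound, since $\gamma(n+p,t)\le t^{n+p}/(n+p)$ and the bound (\ref{upper-stir1}) give summands $O((\log n)^{p-2}/n^2)$; and your remark that $t=0$ is covered by continuity (with $\gamma(n+p,t)/t^{n+p}\to 1/(n+p)$, both sides equal to $1$) closes the last loose end. In short: correct, complete, and an improvement on the paper, which leaves this step to an unpublished reference.
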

We note also that the Laplace transform of the generalized
hypergeometric function is given by, see \cite{Grad}
\begin{equation}\label{laplace-hyper1}
\begin{gathered}
\int_{0}^{\infty} e^{-s t}{ }_{p} F_{q}\left[a_1,...,a_{p} ;b_1,..., b_{q} ; t\right] dt=\frac{1}{s}{ }_{p+1} F_{q}\left[1, a_{1}, \ldots, a_{p} ; b_{1}, \ldots, b_{q} ; s^{-1}\right]. \\
{[p \leq q]}
\end{gathered}
\end{equation}
We recall also that the Riemann zeta function is related to the generalized
hypergeometric function as the following, (see \cite{andrews} page
106)

\begin{equation}\label{01}
\zeta(p)={}_{p+1}F_{p}\left[1,...,1;2,...,2; 1\right]
\end{equation}



\begin{theorem}
The following equality holds true for all integer $p\geq 2$ and for
all positive real parameter $R.$
\begin{equation}\label{zet-stir}
  \zeta(p)=\sum_{n=0}^{+\infty} \frac{S_n^pR^{n+1}}{n+1}+\sum_{n=0}^{\infty}\frac{(-1)^{n}s(n+p-1,p-1)}{(n+p-1)!}\left(\frac{\gamma(n+p,R)}{(n+p-1)R^{n+p-1}}+e^{-R}\right).
\end{equation}
\end{theorem}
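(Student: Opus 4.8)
The plan is to represent $\zeta(p)$ as a Laplace-type integral of the hypergeometric function ${}_pF_p[1,\ldots,1;2,\ldots,2;\,\cdot\,]$, split that integral at the point $R$, and substitute the two available series expansions of the integrand: the horizontal generating function (\ref{part1}) on $[0,R]$ and the expansion (\ref{delta}) on $[R,\infty)$.

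First I would note that, by (\ref{01}) at $x=1$ together with (\ref{02}), $\zeta(p)=\mathrm{Li}_p(1)={}_{p+1}F_p[1,\ldots,1;2,\ldots,2;1]$, while (\ref{laplace-hyper1}) applied with $s=1$, $q=p$, all $a_i=1$ and all $b_i=2$ gives
\[
\int_0^{\infty}e^{-t}\,{}_pF_p[1,\ldots,1;2,\ldots,2;t]\,dt={}_{p+1}F_p[1,\ldots,1;2,\ldots,2;1].
\]
Hence $\zeta(p)=\int_0^{\infty}e^{-t}\,{}_pF_p[1,\ldots,1;2,\ldots,2;t]\,dt$, the integral being convergent precisely because $p\ge 2$ (the parameter excess is $2p-(p+1)=p-1>0$; equivalently $e^{-t}\,{}_pF_p[\ldots;t]$ decays like $t^{-p}$ at infinity). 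Writing $\int_0^{\infty}=\int_0^{R}+\int_R^{\infty}$ then reduces the proof to evaluating the two pieces.

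For the piece over $[0,R]$: by Theorem \ref{lemma2} one has $e^{-t}\,{}_pF_p[1,\ldots,1;2,\ldots,2;t]=\sum_{n\ge 0}S_n^p t^n$, and the bound $|S_n^p|\le 1/n!$ recorded in (\ref{snp-estm1}) makes this series converge uniformly on $[0,R]$, so it may be integrated term by term to give $\sum_{n\ge 0}S_n^pR^{n+1}/(n+1)$, which is exactly the first sum in (\ref{zet-stir}). For the piece over $[R,\infty)$: substituting (\ref{delta}) and cancelling the $e^{t}$ against $e^{-t}$ yields
\[
e^{-t}\,{}_pF_p[1,\ldots,1;2,\ldots,2;t]=\sum_{n\ge 0}\frac{(-1)^n s(n+p-1,p-1)}{(n+p-1)!}\,\frac{\gamma(n+p,t)}{t^{n+p}}.
\]
The crucial observation is that the coefficient $\frac{(-1)^n s(n+p-1,p-1)}{(n+p-1)!}$ equals $\frac{|s(n+p-1,p-1)|}{(n+p-1)!}\ge 0$, since the sign of a Stirling number of the first kind $s(m,k)$ is $(-1)^{m-k}$ and here $m-k=n$; thus every term is nonnegative on $[R,\infty)$ and Tonelli's theorem licenses term-by-term integration with no further work. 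It then remains to evaluate $\int_R^{\infty}\gamma(n+p,t)\,t^{-(n+p)}\,dt$, which I would do with a single integration by parts: taking $u=\gamma(n+p,t)$ (so $du=t^{n+p-1}e^{-t}\,dt$) and $dv=t^{-(n+p)}\,dt$, the boundary term at $\infty$ vanishes because $\gamma(n+p,t)\to\Gamma(n+p)$ while $t^{1-(n+p)}\to 0$ (here $p\ge 2$ is used), leaving
\[
\int_R^{\infty}\frac{\gamma(n+p,t)}{t^{n+p}}\,dt=\frac{1}{n+p-1}\left(\frac{\gamma(n+p,R)}{R^{n+p-1}}+e^{-R}\right).
\]
Summing over $n$ and adding the $[0,R]$-contribution produces the second sum of (\ref{zet-stir}); if preferred, the incomplete-gamma recurrence $\gamma(s+1,R)=s\,\gamma(s,R)-R^{s}e^{-R}$ may be used to rewrite the bracket.

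The only delicate points are the two interchanges of summation and integration, and neither is really an obstacle: over $[R,\infty)$ the sign observation reduces it to Tonelli, and over $[0,R]$ it is immediate from $|S_n^p|\le 1/n!$. As a cross-check one can sidestep (\ref{delta}) altogether: using $\gamma(m,t)/t^{m}=\int_0^1 u^{m-1}e^{-ut}\,du$ together with (\ref{stir-first}) one obtains the closed form $e^{-t}\,{}_pF_p[1,\ldots,1;2,\ldots,2;t]=\frac{1}{(p-1)!}\int_0^1 e^{-ut}(-\ln(1-u))^{p-1}\,du$, after which the tail integral over $[R,\infty)$ is computed by Tonelli and the substitution $u=1-e^{-v}$, recovering the same two sums. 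Beyond this bookkeeping I anticipate no genuine difficulty.
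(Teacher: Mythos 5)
Your proposal follows essentially the same route as the paper's proof: the Laplace representation $\zeta(p)=\int_0^\infty e^{-t}\,{}_pF_p[1,\ldots,1;2,\ldots,2;t]\,dt$ via (\ref{01}), (\ref{02}) and (\ref{laplace-hyper1}), the split of the integral at $R$, the substitution of (\ref{part1}) on $[0,R]$ and of (\ref{delta}) on $[R,\infty)$, and the same integration by parts for $\int_R^\infty \gamma(n+p,t)t^{-(n+p)}\,dt$. Your explicit sign observation $(-1)^n s(n+p-1,p-1)=|s(n+p-1,p-1)|$ justifying the Tonelli step is a slightly more careful version of the paper's appeal to monotone convergence, but the argument is the same.
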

\begin{proof} To prove (\ref{zet-stir}) we use 
(\ref{01}) together with (\ref{laplace-hyper1}), to obtain
\begin{eqnarray*}
 \zeta(p) &=&{}_{p+1}F_{p}\left[1,...,1;2,...,2; 1\right] \\
&=&\int_{0}^{\infty} e^{- t}{ }_{p} F_{p}\left[1,...,1 ; 2,...,2 ;
t\right] dt,
\end{eqnarray*}
or equivalently, using (\ref{part1-1}) and (\ref{delta}), we obtain
\begin{eqnarray*}
 \zeta(p) &=&\int_{0}^{R} e^{-t}{ }_{p} F_{p}\left[1,...,1 ; 2,...,2 ; t\right]dt+\int_{R}^{+\infty} e^{-t}{ }_{p} F_{p}\left[1,...,1 ; 2,...,2 ; t\right]
 dt\\
 &=&\underbrace{\int_{0}^{R} \sum_{n=0}^{+\infty}S_n^p t^{n}dt}_{I_1}+\underbrace{\int_{R}^{+\infty} \sum_{n=0}^{+\infty} (-1)^n\frac{s(n+p-1, p-1)}{(n+p-1)!t^{n+p}}\gamma(n+p,t)
 dt}_{I_2}.
\end{eqnarray*}

By  (\ref{snp-estm1-1}), we have $|S_n^p|\le\frac{1}{n !}$, from which
it follows that
\begin{eqnarray*}
\int_{0}^{R} \sum_{n=0}^{+\infty}|S_n^p| t^{n} d t & \leq& \int_{0}^{R} \sum_{n=0}^{+\infty} \frac{ t^{n}}{n !}dt =\int_{0}^{R} e^{ t} d t=e^{ R}-1.
\end{eqnarray*}
This fact justifies the interchange of integration and summation in $I_1$, and we have
\begin{equation*}
 I_1=\sum_{n=0}^{+\infty} \frac{S_n^p}{n+1} R^{n+1}.
\end{equation*}

By the monotone convergence theorem, we interchange summation and
integration in $I_2$ to obtain:
\begin{eqnarray*}
 I_2 &=&\sum_{n=0}^{\infty}\frac{(-1)^{n}s(n+p-1,p-1)}{(n+p-1)!} \underbrace{\int_{R}^{+\infty}\frac{\gamma(n+p,t)}{t^{n+p}}dt}_J.
\end{eqnarray*}
Integrating $J$ by part, we obtain
\begin{equation}\label{J}
\displaystyle{J=\frac{\gamma(n+p,R)}{(n+p-1)R^{n+p-1}}+\frac{e^{-R}}{n+p-1}}.
\end{equation}
Consequently
\begin{equation}\label{I22}
\displaystyle{I_2=\sum_{n=0}^{\infty}\frac{(-1)^{n}s(n+p-1,p-1)}{(n+p-1)(n+p-1)!}\left(\frac{\gamma(n+p,R)}{R^{n+p-1}}
+e^{-R}\right)}
\end{equation}
 which complete the proof of
(\ref{zet-stir}).
\end{proof}

\subsection{Approximating  Riemann zeta values by rationals}
Our aim now is to use (\ref{zet-stir}) to approximate zeta values by
sequence of rationals. To do so, we define the sequence $(\zeta_{N}(p))_N$ as follows:
For each integer $N\ge p $ ,
\begin{equation}\label{zeta(p)-app}
\zeta_{N}(p)=\sum_{n=0}^{4N}
\frac{S_n^p}{n+1}N^{n+1}+\sum_{n=0}^{N-p} \frac{(-1)^ns(n+p-1,
p-1)}{(n+p-1)N^{n+p-1}},
\end{equation}
 and we show that the error term  $|\zeta(p)-\zeta_{N}(p)|$  decay exponentially to zero for large $N$. Hence the value of $\zeta(p)$ is approximated by $\zeta_{N}(p)$ given by
 (\ref{zeta(p)-app}).
 \begin{theorem}\label{ZAPP}
 Their exist a positive constant $C$  such that for large  $N$,
we have the estimation
\begin{equation*}
 |\zeta(p)-\zeta_{N}(p)|\le CNe^{-N}
\end{equation*}
\end{theorem}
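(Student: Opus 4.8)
The plan is to start from the exact identity \eqref{zet-stir} with $R=N$ and bound the difference with $\zeta_N(p)$ as in \eqref{zeta(p)-app}, splitting the error into three pieces: the tail of the first series $\sum_{n>4N} S_n^p N^{n+1}/(n+1)$, the combined error from the second series (both the truncation at $n=N-p$ and the discrepancy between the term $\frac{\gamma(n+p,N)}{(n+p-1)N^{n+p-1}}+e^{-N}$ appearing in \eqref{zet-stir} and the term $\frac{1}{(n+p-1)N^{n+p-1}}$ appearing in \eqref{zeta(p)-app}), and whatever is left over. The key analytic inputs are the estimate $|S_n^p|\le 1/n!$ from \eqref{snp-estm1}, the elementary bound $\gamma(n+p,N)\le (n+p-1)!$ (since $\gamma(s,N)\le\Gamma(s)$), and a Stirling-type estimate for $|s(n+p-1,p-1)|$; for fixed $p$ this is $O\big((n+p-2)!\,(\log n)^{p-2}\big)$ or, more crudely, $\le (n+p-1)!$, which combined with the factorial denominators tames the second series.

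First I would handle the tail of $I_1$: using $|S_n^p|\le 1/n!$,
\begin{equation*}
\left|\sum_{n>4N}\frac{S_n^p N^{n+1}}{n+1}\right|\le N\sum_{n>4N}\frac{N^{n}}{n!}.
\end{equation*}
Since $N^n/n!$ is decreasing for $n>N$ and decays geometrically once $n\ge 4N$ (ratio $N/(n+1)\le 1/4$), this tail is bounded by $N\cdot\frac{N^{4N}}{(4N)!}\cdot\frac{1}{1-1/4}$, and by Stirling $\frac{N^{4N}}{(4N)!}\le C\big(\frac{e}{4}\big)^{4N}N^{-1/2}$, which is $\le C' e^{-N}$ because $(e/4)^4<e^{-1}$. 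Next, for the second series, write the $n$-th term of $I_2$ minus the corresponding term of the finite sum in $\zeta_N(p)$. For $0\le n\le N-p$ the difference is $\frac{(-1)^n s(n+p-1,p-1)}{(n+p-1)!}\Big(\frac{\gamma(n+p,N)-\Gamma(n+p)}{(n+p-1)N^{n+p-1}}+e^{-N}\Big)$ after noting $\frac{\Gamma(n+p)}{(n+p-1)!}\cdot\frac{1}{(n+p-1)N^{n+p-1}}=\frac{1}{(n+p-1)N^{n+p-1}}$; and the complementary quantity $\Gamma(n+p)-\gamma(n+p,N)=\Gamma(n+p,N)$ is the upper incomplete Gamma function, which for $N$ large compared to the order is exponentially small: $\Gamma(n+p,N)\le C\,N^{n+p-1}e^{-N}$ uniformly for $n\le N-p$ (one can get this from $\Gamma(s,N)=\int_N^\infty t^{s-1}e^{-t}\,dt\le N^{s-1}e^{-N}\cdot\frac{1}{1-(s-1)/N}$ when $s-1<N$). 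Hence each such term contributes $O\big(\frac{|s(n+p-1,p-1)|}{(n+p-1)!}e^{-N}\big)$, and since $p$ is fixed the series $\sum_n \frac{|s(n+p-1,p-1)|}{(n+p-1)!}$ converges (the generating function \eqref{stir-first} has radius of convergence $1$, so the coefficients $|s(n,p-1)|/n!$ are summable against any $r<1$; in fact they are summable outright for the normalization here). For the tail $n>N-p$ of $I_2$, bound $\big|\frac{\gamma(n+p,N)}{(n+p-1)N^{n+p-1}}+e^{-N}\big|\le \frac{\Gamma(n+p)}{(n+p-1)N^{n+p-1}}+e^{-N}$ and again use summability of $\frac{|s(n+p-1,p-1)|}{(n+p-1)!}$ together with the fact that this is the tail of a convergent series, which is $o(1)$; a little care shows it is in fact $O(e^{-cN})$ because $N^{-(n+p-1)}\Gamma(n+p)/(n+p-1)!\sim 1/N^{n+p-1}$ is itself geometrically small for $n$ in that range.

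The main obstacle will be getting a clean \emph{uniform} exponential bound on the upper incomplete Gamma function $\Gamma(n+p,N)$ over the whole range $0\le n\le N-p$, since when $n$ is close to $N-p$ the order $n+p$ of the Gamma function is comparable to the argument $N$ and the naive bound degrades. The fix is to keep the truncation at $N-p$ (so that the order never exceeds the argument) and use the sharper inequality $\Gamma(s,N)\le \frac{N^{s-1}e^{-N}}{1-(s-1)/N}$ valid for $s-1<N$; near the endpoint $s-1=N-1$ this gives a factor $N$, which is exactly where the stray factor of $N$ in the final bound $CNe^{-N}$ comes from. Assembling the three pieces — tail of $I_1$ is $O(e^{-N})$, the main part of the $I_2$-difference is $O(e^{-N})$ times a convergent constant, and the $I_2$-tail together with the endpoint correction is $O(Ne^{-N})$ — yields $|\zeta(p)-\zeta_N(p)|\le CNe^{-N}$ for all large $N$, with $C$ depending only on $p$.
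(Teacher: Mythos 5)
Your decomposition is essentially the paper's own (tail of $I_1$; the truncated range $0\le n\le N-p$ of $I_2$; the tail $n>N-p$ of $I_2$; the residual $e^{-N}$ series), and two of the three pieces are handled soundly: the $I_1$ tail argument via $|S_n^p|\le 1/n!$, geometric comparison and Stirling's formula is exactly the paper's $\epsilon_1$ bound, and your estimate $\Gamma(s,N)\le N^{s-1}e^{-N}/(1-(s-1)/N)$ for $s-1<N$ on the truncated range is equivalent to the paper's use of the identity \eqref{arfken} followed by the geometric bound $\sum_j((n+p-1)/N)^j\le N$; both correctly locate the stray factor of $N$ at the endpoint $n=N-p$.

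The gap is in the tail $n>N-p$ of $I_2$. There you bound $\gamma(n+p,N)\le\Gamma(n+p)=(n+p-1)!$, which cancels the only factorial present in the denominator of the $I_2$ term and leaves you with $\frac{|s(n+p-1,p-1)|}{(n+p-1)N^{n+p-1}}$. But $|s(m,p-1)|$ grows like $(m-1)!(\log m)^{p-2}/(p-2)!$ (this is the content of \eqref{upper-stir1}, and for $p=3$ one has exactly $|s(m,2)|=(m-1)!\sum_{j=1}^{m-1}j^{-1}$), so with $m=n+p-1\ge N$ your majorant terms behave like $(m-1)!(\log m)^{p-2}/(mN^{m})$; the ratio of consecutive terms is about $m/N\ge 1$, hence the terms eventually \emph{increase} factorially and the proposed tail series diverges. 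Your parenthetical ``$N^{-(n+p-1)}\Gamma(n+p)/(n+p-1)!\sim 1/N^{n+p-1}$'' double-counts the factorial: $(n+p-1)!$ occurs once in the denominator of the term, and once $\gamma\le\Gamma$ consumes it, nothing is left to tame the factorial growth of the Stirling numbers. A related slip: $\sum_n|s(n+p-1,p-1)|/(n+p-1)!$ does \emph{not} converge (it diverges like $\sum(\log m)^{p-2}/m$; radius of convergence $1$ in \eqref{stir-first} does not give summability of the normalized coefficients); what converges is $\sum_n|s(n+p-1,p-1)|/\bigl((n+p-1)(n+p-1)!\bigr)$, and fortunately that extra $1/(n+p-1)$ is present in the actual terms of $I_2$. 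To close the gap you must retain the exponential localization of $\gamma$ in the tail: for $n+p-1\ge N$ the integrand $t^{n+p-1}e^{-t}$ is increasing on $[0,N]$, so $\gamma(n+p,N)\le N^{n+p}e^{-N}$, which turns the tail term into $\frac{|s(n+p-1,p-1)|}{(n+p-1)(n+p-1)!}\,Ne^{-N}$ and restores convergence; this is precisely what the paper achieves with Temme's expansion \eqref{equ-Temme} together with $(n+p+j)!\ge(n+p)^j(n+p-1)!$.
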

\begin{proof}
For larger integer $N$, and by taking $R=N$ in  (\ref{zet-stir}), we get
\begin{equation}\label{zet-stir2}
\begin{split}
 \zeta(p)-\zeta_{N}(p)=
\underbrace{\sum_{n=4N+1}^{+\infty}
\frac{S_n^pN^{n+1}}{n+1}}_{\epsilon_1}-\underbrace{\sum_{n=0}^{N-p}\frac{(-1)^{n}s(n+p-1,p-1)}{(n+p-1){N^{n+p-1}}}\left(1-\frac{\gamma(n+p,N)}{(n+p-1)!}\right)}_{\epsilon_2}&+\\
\underbrace{\sum_{n=N-p+1}^{\infty}\frac{(-1)^{n}s(n+p-1,p-1)}{(n+p-1)N^{n+p-1}}\frac{\gamma(n+p,N)}{(n+p-1)!}}_{\epsilon_3}
+\underbrace{e^{-N}\sum_{n=0}^{\infty}\frac{(-1)^{n}s(n+p-1,p-1)}{(n+p-1)(n+p-1)!}}_{\epsilon_4}.
\end{split}
\end{equation}
Using (\ref{snp-estm1-1}), we deduce that the first series in the right hand side of
(\ref{zet-stir2}) is estimated as follows
\begin{equation*}
\epsilon_{1} =
\displaystyle{\left|\sum_{n=4N+1}^{+\infty}\frac{S_n^p}{n+1}N^{n+1}\right|}
\leq \displaystyle{\sum_{n=4N+1}^{+\infty}\frac{N^{n+1}}{(n+1)!}},
\end{equation*}
and the error term is bounded by
\begin{equation*}
 \epsilon_1  \leq \displaystyle{\frac{N^{4N+2}}{(4N+2)!}\sum_{n=0}^{+\infty}\left(\frac{1}{4}\right)^n}=\displaystyle{\frac{4N^{4N+2}}{3(4N+2)!}}.
\end{equation*}
Using  the following well known bounds, valid for all positive integers $n$
$$
\sqrt{2 \pi n}\left(\frac{n}{e}\right)^{n} e^{\frac{1}{12 n+1}}<n
!<\sqrt{2 \pi n}\left(\frac{n}{e}\right)^{n} e^{\frac{1}{12 n}},
$$
we have
\begin{equation}\label{b}
\displaystyle{\frac{4N^{4N+2}}{3(4N+2)!}} \le \frac{4}{3\sqrt{2 \pi
(4N+2)}}\left(\frac{ eN}{4N+2}\right)^{4N+2} e^{-\frac{1}{12
(4N+2)+1}},
\end{equation}
\begin{equation*}
\epsilon_1 \le \frac{1}{\sqrt{ N}}\left(\frac{ e}{4}\right)^{4N}\le
\frac{1}{\sqrt{ N}}e^{-N}.
\end{equation*}
 Hence $\epsilon_1$ decay exponentially to $0$ when $N$ goes to infinity.
 To proceed further, we recall the following upper bound of the
Stirling number of the first kind  valid for $m\;
=\;1,\;\cdots\;,\;n -1,$ see \cite{Jose}
\begin{equation}\label{upper-stir1}
\displaystyle{|s(n+1, m+1)| \leq \frac{n !(\log n)^m}{m
!}\left(1+\frac{m}{\log n}\right)}.
\end{equation}
We recall also that, the following inequalities holds for all
integers $n,$ $p$ and $j$
\begin{equation}\label{Ineqfact1}
(n+p+j)!\geq (n+p)^j(n+p-1)!
\end{equation}
\begin{equation}\label{Ineqfact1}
(n+p+j)!\geq (n+p)!j!
\end{equation}
Using the following equality found in \cite{Temme}
\begin{equation}\label{equ-Temme}
\frac{\gamma(n+p,N)}{(n+p-1)!}=e^{-N}N^{n+p}\sum_{j=0}^{+\infty}\frac{N^j}{(n+p+j)!}.
\end{equation}
Since $n>N-p $ in series defining $\epsilon_3 $, and using the
following inequality
\begin{equation}\label{Ineqfact1}
(n+p+j)!\geq (n+p)^j(n+p-1)!,
\end{equation}
 we have
\begin{eqnarray*}
\frac{\gamma(n+p,N)}{(n+p-1)!}&\leq& e^{-N}N^{n+p}\sum_{j=0}^{+\infty}\frac{N^j}{(n+p)^j(n+p-1)!}\\
  &=& \frac{e^{-N}N^{n+p}}{(n+p-1)!}\sum_{j=0}^{+\infty}\left(\frac{N}{n+p}\right)^j\\
  &=& \frac{e^{-N}N^{n+p}(n+p)}{(n+p-1)!(n+p-N)}.
\end{eqnarray*}
Consequently,
\begin{eqnarray*}
  \epsilon_3
  &\leq&e^{-N}\sum_{n=N-p+1}^{\infty}\frac{(-1)^{n}s(n+p-1,p-1)N^{n+p}(n+p)}{(n+p-1)N^{n+p-1}(n+p-1)!(n+p-N)}\\
  &=& Ne^{-N}\sum_{n=N-p+1}^{\infty}\frac{(-1)^{n}s(n+p-1,p-1)(n+p)}{(n+p-1)(n+p-1)!(n+p-N)}.
\end{eqnarray*}
By (\ref{upper-stir1}),
\begin{eqnarray*}
\epsilon_{3}&\leq&\displaystyle{
\frac{Ne^{-N}}{(p-2)!}\sum_{n=N-p+1}^{\infty}\frac{(n+p)(\log(n+p-2))^{p-2}}{(n+p-1)^2(n+p-N)}\left(1+\frac{p-2}{\log(n+p-2)}\right)}\\
&\leq&\displaystyle{\frac{\beta Ne^{-N}}{(p-2)!}},
\end{eqnarray*}
where
$\displaystyle{\beta=\sum_{n=N-p+1}^{\infty}\frac{(n+p)(\log(n+p-2))^{p-2}}{(n+p-1)^2(n+p-N)}\left(1+\frac{p-2}{\log(n+p-2)}\right)}.$ This prove that $\epsilon_3$ decay exponentially
to $0.$
Using again (\ref{upper-stir1}) and by similar technics used to give
an upper bound  for $\epsilon_{3}$ we can show that
\begin{eqnarray*}
\epsilon_{4} & \leq& \displaystyle{
\frac{e^{-N}}{(p-2)!}\sum_{n=0}^{\infty}\frac{(\log(n+p-2))^{p-2}}{(n+p-1)^2}\left(1+\frac{p-2}{\log
(n+p-2)}\right)}\\
&\leq&\displaystyle{\frac{\mu e^{-N}}{(p-2)!}},
\end{eqnarray*}
where
$\displaystyle{\mu=\sum_{n=0}^{\infty}\frac{(\log(n+p-2))^{p-2}}{(n+p-1)^2}\left(1+\frac{p-2}{\log
(n+p-2)}\right)}.$ This prove that $\epsilon_4$ decay exponentially
to $0.$\\
Let now study the decay rate of $\epsilon_2$. Using
the following equality found in [\cite{Arfken}, Page 566]
\begin{equation}\label{arfken}
\displaystyle{1-\frac{\gamma(n,x)}{\Gamma(n)}=e^{-x}\sum_{n=0}^{n-1}\frac{x^j}{j!}},
\end{equation}
we can write
\begin{eqnarray*}
\epsilon_2 &=& e^{-N}\sum_{n=0}^{N-p}\frac{(-1)^{n}s(n+p-1,p-1)}{(n+p-1){N^{n+p-1}}}\left(\sum_{j=0}^{n+p-1}\frac{N^j}{j!}\right) \\
 &=& e^{-N}\sum_{n=0}^{N-p}\frac{(-1)^{n}s(n+p-1,p-1)}{(n+p-1)(n+p-1)!}\left(\sum_{j=0}^{n+p-1}\frac{(n+p-1)!}{j!{N^{n+p-1-j}}}\right) \\
&=&e^{-N}\sum_{n=0}^{N-p}\frac{(-1)^{n}s(n+p-1,p-1)}{(n+p-1)(n+p-1)!}\left(\sum_{j=0}^{n+p-1}\frac{(j+1)(j+2)\cdots(n+p-1)}{{N^{n+p-1-j}}}\right)
\\
 &\leq& e^{-N}\sum_{n=0}^{N-p}\frac{(-1)^{n}s(n+p-1,p-1)}{(n+p-1)(n+p-1)!}\left(\sum_{j=0}^{n+p-1}\left(\frac{n+p-1}{N}\right)^j\right)
\\
 &=&
 e^{-N}\sum_{n=0}^{N-p}\frac{(-1)^{n}s(n+p-1,p-1)}{(n+p-1)(n+p-1)!}\left(\frac{1-\left(\frac{n+p-1}{N}\right)^{n+p}}{1-\frac{n+p-1}{N}}\right)\\
&\leq&
Ne^{-N}\sum_{n=0}^{N-p}\frac{(-1)^{n}s(n+p-1,p-1)}{(n+p-1)(n+p-1)!}.
\end{eqnarray*}
Once again using  (\ref{upper-stir1}) we obtain the following upper
bound of $\epsilon_2$
\begin{eqnarray*}
\epsilon_{2} & \leq& \displaystyle{
Ne^{-N}\sum_{n=0}^{N-p}\frac{(\log(n+p-2))^{p-2}}{(n+p-1)^2(p-2)!}\left(1+\frac{p-2}{\log
(n+p-2)}\right)}\\
&\leq&\displaystyle{\frac{\nu Ne^{-N}}{(p-2)!}},\\
\end{eqnarray*}
where
$\displaystyle{\nu=\sum_{n=0}^{\infty}\frac{(\log(n+p-2))^{p-2}}{(n+p-1)^2}\left(1+\frac{p-2}{\log
(n+p-2)}\right)}.$
\end{proof}

\section{The \texorpdfstring{$S_n^{p}$}- sequence and  Hurwitz zeta function}\label{Hurwitz}
The  Hurwitz zeta function (see \cite{Titchmarsh}) is defined by
\begin{equation*}
\zeta(s,x)= \sum_{n=0}^{\infty}\frac{1}{(n+x)^s}\;\;(0<x\le1,\;\;\Re(s)>1).
\end{equation*}
In particular, $\zeta(s,x)$ is reduced to  the Riemann zeta function $\zeta(s)$ when $x=1$,
\begin{equation*}
\zeta(s)= \sum_{n=1}^{\infty}\frac{1}{n^s}\;\;(\Re(s)>1).
\end{equation*}
Moreover for $x=\frac{1}{2},$ we have   
\begin{equation}\label{Sondow}
    \zeta\left(s,\frac{1}{2}\right)=(2^s-1)\zeta(s).
\end{equation}
The  Hurwitz zeta function, has  an integral representation
\begin{equation*}
\zeta(s,x)=\frac{1}{\Gamma(s)}\int_{0}^{+\infty}\frac{e^{-xt}t^{s-1}}{1-e^{-t}}dt.
\end{equation*}
\begin{theorem}
Let $p=2,3,...$, then for $0<x\le1$, we have
\begin{equation}\label{zethurw}
\zeta(p+1,x) =\frac{1}{p!}\sum_{n=0}^{\infty}(-1)^n\sum_{k=0}^{n}\frac{(n-k)!(k+p-1)!S_{n-k}^{k+p}}{(n-k+1)k!}(x-1)^{k}.
\end{equation}
\end{theorem}
\begin{proof} 

Taking the $(p-1)$th derivative of both sides of (\ref{g-int-1}), we get
\begin{equation*}
g_n^{(p-1)}(x)= \frac{(-1)^{n+p-1}}{n!}\int_{0}^{+\infty}(1-e^{-t})^n e^{-xt}t^{p-1}dt,
\end{equation*}
from which we obtain that
\begin{equation*}
 \frac{1}{p!}\sum_{n=0}^{\infty}\frac{(-1)^{n+p-1}n!}{n+1}g_n^{(p-1)}(x)=\frac{1}{p!} \int_{0}^{+\infty}\sum_{n=0}^{\infty}\frac{(1-e^{-t})^n}{n+1} e^{-xt}t^{p-1}dt.
\end{equation*}
Using the fact that
\begin{equation*}
  -\frac{\ln(1-u)}{u}=\sum_{n=0}^{\infty}\frac{u^n}{n+1}, \quad |u|<1,
\end{equation*}
we get
\begin{equation*}
 \frac{1}{p!}\int_{0}^{+\infty}\sum_{n=0}^{\infty}\frac{(1-e^{-t})^n}{n+1} e^{-xt}t^{p-1}dt= \frac{1}{p!}\int_{0}^{+\infty}\frac{e^{-xt}t^{p}}{1-e^{-t}}dt=\zeta(p+1,x) .
\end{equation*}
Hence
\begin{equation}\label{hurw-gn}
\zeta(p+1,x) =\frac{1}{p!}\sum_{n=0}^{\infty}\frac{(-1)^{n+p-1}n!}{n+1}g_n^{(p-1)}(x).
\end{equation}
Differentiating $p-1$ times both sides of (\ref{gn-stir})  we obtain
\begin{equation*}
g_n^{(p-1)}(x)=  \sum_{k=p-1}^{+\infty}\frac{(-1)^kk!S_n^{k+1}}{(k-p+1)!}(x-1)^{k-p+1}.
\end{equation*}
Change $k$ to $k+p-1$  in the  previous sum, then we get
\begin{equation*}
g_n^{(p-1)}(x)=  \sum_{k=0}^{+\infty}\frac{(-1)^{k+p-1}(k+p-1)!S_n^{k+p}}{k!}(x-1)^{k}.
\end{equation*}
Substitute this in the  expression (\ref{hurw-gn}) to get
\begin{equation*}
\zeta(p+1,x) =\frac{1}{p!}\sum_{n=0}^{\infty}\sum_{k=0}^{+\infty}\frac{(-1)^{n+k}n!(k+p-1)!S_n^{k+p}}{(n+1)k!}(x-1)^{k},
\end{equation*}
which can be written as
\begin{equation*}
\zeta(p+1,x) =\frac{1}{p!}\sum_{n=0}^{\infty}(-1)^n\sum_{k=0}^{n}\frac{(n-k)!(k+p-1)!S_{n-k}^{k+p}}{(n-k+1)k!}(x-1)^{k}.
\end{equation*}
\end{proof}
Consequently, we refined the following formula,
due to Hasse (see \cite{Hasse}).
\begin{corollary}
For $p\geq 2,$ we have
\begin{equation}\label{zet-hass}
  p\zeta(p+1)=\sum_{n=0}^{\infty}\frac{(-1)^nn!}{n+1}S_n^p.
\end{equation}
\end{corollary}
\begin{proof} The   expansion (\ref{zet-hass}) can  be deduced
directly from  (\ref{zethurw})  by setting $x=1$. 
\end{proof}

\section{The \texorpdfstring{$S_n^{p}$}- sequence and polylogarithm function  }\label{Poly-Log}
We recall  that  the polylogarithm is defined by the series, see \cite{andrews}
$$
\mathrm{Li}_{p} (x):=\sum_{n=1}^{\infty} \frac{x^{n}}{n^{p}} \text {
for }|x| \leq 1, p=2,3, \ldots
$$
The polylogarithm function can be expanded in terms of
generalized Stirling numbers of the second kind.
\begin{theorem}
The generalized Stirling numbers of the second kind  , have as
'horizontal' generating function:
\begin{equation}\label{stir-poly}
 \mathrm{Li}_{p}( t )=\sum_{n=0}^{\infty}(-1)^nn!S_n^p\left(\frac{t}{t-1}\right)^{n+1},\;\;  t<\frac{1}{2}.
\end{equation}
\end{theorem}
\begin{proof}
Using the following decay rate of the generalized Stirling numbers:
\begin{equation}\label{snp-estm1-2}
\forall\;\;n\in\mathbb{N},\;\;|S_n^p|\le \frac{1
}{n!}.
\end{equation}
 it follows that the series $ \displaystyle{\sum_{n=0}^{\infty}n!S_n^px^n}$
 is absolutely convergent when $0\le x<1$.
Using (\ref{stir-ln2}), and after interchange the order of summation and integration, we obtain
\begin{eqnarray*}
 \sum_{n=0}^{\infty}(-1)^nn!S_n^px^n &=& \frac{(-1)^{p-1}}{(p-1)!}\int_{0}^{1}\left[\sum_{n=0}^{\infty}\left(-x(u-1)\right)^n \right]\ln^{p-1}(u)du \\
  &=& \frac{(-1)^{p-1}}{(p-1)!}\int_{0}^{1} \frac{\ln^{p-1}(u)}{1+x(u-1)}du \\
   &=& \frac{(-1)^{p-1}}{(p-1)!(1+x)}\int_{0}^{1} \frac{\ln^{p-1}(u)}{1+\frac{xu}{1-x}}du \\
   &=& \frac{(-1)^{p-1}}{(p-1)!(1-x)}\sum_{k=0}^{\infty}\left(\frac{x}{x-1}\right)^k\int_{0}^{1} u^k\ln^{p-1}(u)du. \\
\end{eqnarray*}
Now using the following well known equality
\begin{equation}\label{euler-ln}
 \int_{0}^{1}u^k\ln(u)^{p-1}du= \frac{(-1)^{p-1}(p-1)!}{(k+1)^p},\;\;p=1,2,3,....
\end{equation}
we obtain
\begin{eqnarray*}
  \sum_{n=0}^{\infty}(-1)^nn!S_n^px^n &=& \frac{1}{(1-x)}\sum_{k=0}^{\infty}\frac{\left(\frac{x}{x-1}\right)^k}{(k+1)^p} \\
   &=& \frac{1}{x}\sum_{k=0}^{\infty}\frac{\left(\frac{x}{x-1}\right)^{k+1}}{(k+1)^p}\\
    &=&\frac{1}{x}\mathrm{Li}_{p}\left(\frac{x}{x-1}\right).
\end{eqnarray*}
If we replace $\frac{x}{x-1}$ by $t$ in the last expression, we
obtain the result (\ref{stir-poly}).
\end{proof}
Consequently, we refined the following result due to Sondow (see \cite{Sondow}).
\begin{corollary} For $p\geq 2,$ we have
\begin{equation}\label{zet-Sondow}
 (1-2^{1-p})\zeta(p)=\sum_{n=0}^{\infty}\frac{(-1)^nn!}{2^{n+1}}S_n^p.
\end{equation}
\end{corollary}
\begin{proof}
By choosing $t = -1$ in (\ref{stir-poly}), we have
\begin{equation*}
   \mathrm{Li}_{p}( -1 )=\sum_{n=0}^{\infty}\frac{n!(-1)^{n+1}}{2^{n+1}}S_n^p.
\end{equation*}
Since,
\begin{equation*}
  \mathrm{Li}_{p}( -1 )= \sum_{n=1}^{\infty} \frac{(-1)^{n}}{n^{p}}=\sum_{n=1}^{\infty} \frac{1}{(2n)^{p}}-\sum_{n=0}^{\infty} \frac{1}{(2n+1)^{p}}=
  \frac{1}{2^p}\left(\zeta(p)-\zeta(p,\frac{1}{2}) \right)
\end{equation*}
and using (\ref{Sondow}), 
 we deduce the formula (\ref{zet-Sondow}). 
\end{proof}

\section{The  \texorpdfstring{$S_n^{p}$ }-   sequence and  harmonic numbers}\label{HarN}
The $p-$order harmonic numbers are defined by
\begin{equation*}
H_{n}^{(p)}=
\sum_{k=0}^{n-1}\frac{1}{(k+1)^p},\;\;n\in\mathbb{N}^*,\;\;p\in\mathbb{N},
\end{equation*}
with the convention $H_{0}^{(p)}=1,\;\;p\in\mathbb{N}$
\begin{proposition}
The generalized Stirling numbers  of the second kind are related to the the  harmonic numbers
in the following manner:
\begin{equation}\label{snp-harmo}
 S_n^{p}=\frac{(-1)^{n}}{n!}\left(\sum_{k=0}^{n}(-1)^{k+1} \binom{n+1}{k} H_{k}^{(p)}+(-1)^{n}H_{n+1}^{(p)}\right).
\end{equation}
\end{proposition}
\begin{proof}
First note that, $ \displaystyle\frac{1}{(k+1)^p}=H_{k+1}^{(p)}-H_{k}^{(p)}$, so
\begin{eqnarray*}
\sum_{k=0}^{n}\frac{(-1)^{k}\binom{n}{k}}{(k+1)^p}
   &=& \sum_{k=0}^{n}(-1)^{k}\binom{n}{k}H_{k+1}^{(p)}-\sum_{k=0}^{n}(-1)^{k}\binom{n}{k}H_{k}^{(p)} \\
   &=& \sum_{k=0}^{n}(-1)^{k}\binom{n}{k}H_{k+1}^{(p)}-\sum_{k=0}^{n-1}(-1)^{k+1}\binom{n}{k+1}H_{k+1}^{(p)}-1 \\
   &=& \sum_{k=0}^{n-1}(-1)^{k}\left(\binom{n}{k}+\binom{n}{k+1}\right)H_{k+1}^{(p)}+(-1)^nH_{n+1}^{(p)}-1,
\end{eqnarray*}
 by   Pascal's identity
\begin{equation*}
 \binom{n}{k}+\binom{n}{k+1}=\binom{n+1}{k+1},
\end{equation*}
and the change  of the indices  in the summation, we deduce (\ref{snp-harmo}).
\end{proof}
The (exponential) complete Bell polynomials $Y_n=Y_n(x_1, x_2,...,x_n)$ are
defined by (see \cite{Comtet}):
\begin{equation}\label{bell-genera}
  \exp\left(\sum_{m=1}^{\infty}x_m\frac{t^m}{m!} \right)=1+\sum_{n=1}^{\infty}Y_n(x_1, x_2,...)\frac{t^n}{n!}.
\end{equation}
\begin{theorem}
 The $S_n^{p}$-sequence can also be expressed in terms of harmonic numbers  as
\begin{equation}\label{snp-bell}
 S_n^{p+1}= \frac{(-1)^n}{(n+1)!p!} y_{p}(n),
\end{equation}
where
\begin{equation}\label{bell-yn}
y_{p}(n)=
Y_{p}\left(H_{n+1}^{(1)},1!H_{n+1}^{(2)},2!H_{n+1}^{(3)},...,(p-1)!H_{n+1}^{(p)}\right).
\end{equation}
\end{theorem}
\begin{proof}
First, we may write
\begin{eqnarray*}
  \frac{(-1)^{n}}{(1-t)_{n+1}} &=& \frac{(-1)^{n}}{(n+1)!(1-t)(1-\frac{t}{2})(1-\frac{t}{3})...(1-\frac{t}{n+1})} \\
   &=& \frac{(-1)^{n}}{(n+1)!}\exp\left(-\sum_{k=0}^{n}\ln(1-\frac{t}{k+1})  \right),
\end{eqnarray*}
expanding $\displaystyle\ln(1-\frac{t}{k+1})$  in  power series of $t$ in the second term,  and interchanging the order of summation, we obtain
\begin{eqnarray*}
  \frac{(-1)^{n}}{(1-t)_{n+1}}
   &=& \frac{(-1)^{n}}{(n+1)!}\exp\left(\sum_{p=1}^{\infty}\frac{H_{n+1}^{(p)}}{p}t^{p}  \right)\\
   &=& \frac{(-1)^{n}}{(n+1)!}\exp\left(\sum_{p=1}^{\infty}\frac{(p-1)!H_{n+1}^{(p)}}{p!}t^{p}  \right).\\
\end{eqnarray*}
From (\ref{bell-genera}) and (\ref{bell-yn}), we have
\begin{equation}\label{expression}
 \frac{(-1)^{n}}{(1-t)_{n+1}}=\frac{(-1)^{n}}{(n+1)!}\left( 1+\sum_{p=1}^{\infty}y_p(n)\frac{t^p}{p!}\right).
\end{equation}

Identifying (\ref{expression}) with (\ref{generatp-1}), we deduce (\ref{snp-bell}).
\end{proof}
We list below the first five  complete Bell polynomials  (see \cite{Comtet}  p.307)
\begin{eqnarray*}
 Y_{1}(x_1)  &=& x_1 \\
 Y_{2}(x_1,x_2) &=&x_1^2 +x_2  \\
 Y_{3}(x_1,x_2,x_3) &=& x_1^3+3x_1x_2+x_3 \\
 Y_{4}(x_1,x_2,x_3,x_4) &=&x_1^4+6x_1^2x_2+4x_1x_3+3x_2^2+x_4 \\
 Y_{5}(x_1,x_2,x_3,x_4,x_5) &=& x_1^5+10x_1^3x_2+10x_1^2x_3+15x_1x_2^2+5x_1x_4+10x_2x_3+x_5.
\end{eqnarray*}
Thus, we deduce that
\begin{eqnarray}
  S_n^2 &=& \frac{(-1)^n}{(n+1)!}H_{n+1}^{(1)} \label{Sn2} \\
  S_n^3  &=&\frac{(-1)^n}{2(n+1)!}\left[\left(H_{n+1}^{(1)}\right)^2 +H_{n+1}^{(2)}\right]\nonumber  \\
 S_n^4  &=&\frac{(-1)^n}{6(n+1)!}\left[\left(H_{n+1}^{(1)}\right)^3 +3H_{n+1}^{(1)}H_{n+1}^{(2)}+2H_{n+1}^{(3)}\right] \nonumber\\
 S_n^5  &=&\frac{(-1)^n}{24(n+1)!}\left[\left(H_{n+1}^{(1)}\right)^4 +6\left(H_{n+1}^{(1)}\right)^2H_{n+1}^{(2)}+8H_{n+1}^{(1)}H_{n+1}^{(3)}+3\left(H_{n+1}^{(2)}\right)^2+6H_{n+1}^{(4)}\right] \nonumber\\
S_n^6  &=&
\frac{(-1)^n}{120(n+1)!}\left[\left(H_{n+1}^{(1)}\right)^5
+10\left(H_{n+1}^{(1)}\right)^3H_{n+1}^{(2)}+20\left(H_{n+1}^{(1)}\right)^2H_{n+1}^{(3)}\right.
\nonumber\\
&&\left.+15H_{n+1}^{(1)}\left(H_{n+1}^{(2)}\right)^2+30H_{n+1}^{(1)}H_{n+1}^{(4)}+20H_{n+1}^{(2)}H_{n+1}^{(3)}+24H_{n+1}^{(5)}\right]\nonumber.
\end{eqnarray}
Note here that the author of \cite{Shen} proved similar equalities
of harmonic sums  without noting that such harmonic sums are related
to the generalized Stirling numbers of the second kind.
\section{The \texorpdfstring{$S_n^{p}$}- sequence and  multiple sums}\label{Multi-Sum}
In this section we show some relations between  the generalized Stirling numbers and multiple sums.
\begin{proposition}
The $S_n^{p}$- sequence can be rewritten  through a multiple sum
\begin{equation}\label{S_n^{p}-msum}
S_n^{p}=\frac{(-1)^{n+p}}{(n+1)!(p-1)!}
\sum_{k_1=0}^{n}\frac{1}{k_1+1}\sum_{k_2=0}^{k_1}\frac{1}{k_2+1}...\sum_{k_{p-1}=0}^{k_{p-2}}\frac{1}{k_{p-1}+1},\;\;p\geq
2.
\end{equation}
\end{proposition}
\begin{proof} The proof is by induction on $p$. For $p=2$,
(\ref{S_n^{p}-msum}) is  true, in fact we have (\ref{Sn2}).

Thus, the formula is clearly true for $p = 2$. Assume
it is true for $2,3,...,p $.  By (\ref{stir-ln2}), we have
\begin{eqnarray*}
  S_n^{p+1} &=& \frac{(-1)^{n+p}}{n!p!}\int_{0}^{1}u^n\ln^{p-1}(1-u)\ln(1-u)du\\
   &=& \frac{(-1)^{n+p+1}}{n!p!}\sum_{k=0}^{\infty}\frac{1}{k+1}\int_{0}^{1}u^{n+k+1}\ln^{p-1}(1-u)du \\
  &=&  \frac{(-1)^{n+p+1}}{n!p!}\sum_{k=0}^{\infty}\frac{1}{k+1}(-1)^{n+k+p}(n+k+1)!(p-1)!S_{n+k+1}^{p} \\
   &=&  \frac{(-1)^{n+p+1}}{n!p!}\sum_{k=0}^{\infty}\frac{1}{(k+1)(n+k+2)}\sum_{k_1=0}^{n+k+1}\frac{1}{k_1+1}\sum_{k_2=0}^{k_1}\frac{1}{k_2+1}...\sum_{k_{p-1}=0}^{k_{p-2}}\frac{1}{k_{p-1}+1} . \\
\end{eqnarray*}
 Using the fact that:
\begin{equation}\label{decomp}
 \frac{1}{(k+1)(n+k+2)}=\frac{1}{n+1}\left(\frac{1}{k+1}-\frac{1}{n+k+2} \right),
\end{equation} we obtain
 \begin{eqnarray*}
 S_n^{p+1}   &=&\frac{(-1)^{n+p+1}}{(n+1)!p!}\sum_{k=0}^{\infty}\frac{1}{k+1}\sum_{k_1=0}^{n+k+1}\frac{1}{k_1+1}\sum_{k_2=0}^{k_1}\frac{1}{k_2+1}...\sum_{k_{p-1}=0}^{k_{p-2}}\frac{1}{k_{p-1}+1} \\
    &&  - \frac{(-1)^{n+p+1}}{(n+1)!p!}\sum_{k=0}^{\infty}\frac{1}{n+k+2}\sum_{k_1=0}^{n+k+1}\frac{1}{k_1+1}\sum_{k_2=0}^{k_1}\frac{1}{k_2+1}...\sum_{k_{p-1}=0}^{k_{p-2}}\frac{1}{k_{p-1}+1} \\
    &=&  \frac{(-1)^{n+p+1}}{(n+1)!p!}\sum_{k=0}^{n}\frac{1}{k+1}\sum_{k_1=0}^{k}\frac{1}{k_1+1}\sum_{k_2=0}^{k_1}\frac{1}{k_2+1}...\sum_{k_{p-1}=0}^{k_{p-2}}\frac{1}{k_{p-1}+1}.
 \end{eqnarray*}
\end{proof}
\begin{proposition}
The $S_n^{p}$-sequence can be rewritten also through a multiple sum
\begin{equation}\label{desired-equ}
S_n^{p+1}=\frac{(-1)^{n}}{(n+1)!}\sum_{k_1=0}^{p}\frac{1}{2^{p-k_1}}
\sum_{k_2=0}^{k_1}\frac{1}{3^{k_1-k_2}} ...\sum_{k_{n-1}=0}^{k_{n-2}}\frac{1}{n^{k_{n-2}-k_{n-1}}}\sum_{k_{n}=0}^{k_{n-1}}\frac{1}{(n+1)^{k_{n}}}.
\end{equation}
\end{proposition}
\begin{proof}
From (\ref{generatp}), we have
\begin{equation*}
  \frac{(-1)^{n}}{(1-t)_{n+1}}=\frac{(-1)^{n}}{(n+1)!(1-t)(1-\frac{t}{2})(1-\frac{t}{3})...(1-\frac{t}{n+1})}.
\end{equation*}
Now, for each $k=1,2,...,n+1$, we expand  $(1-\frac{t}{k})^{-1}$ in a power series of $t$, we get
\begin{equation*}
 \frac{(-1)^{n}}{(1-t)_{n+1}}=\frac{(-1)^{n}}{(n+1)!}\sum_{k_1=0}^{\infty}\sum_{k_2=0}^{\infty}
 ...\sum_{k_n=0}^{\infty}\frac{t^{k_1+k_2+...+k_{n+1}}}{2^{k_1}3^{k_2}...(n+1)^{k_{n+1}}}.
 \end{equation*}
By multiple Cauchy product of infinite series formula, we have
\begin{equation*}
 \frac{(-1)^{n}}{(1-t)_{n+1}}=\frac{(-1)^{n}}{(n+1)!}\sum_{k_1=0}^{\infty}t^{k_1}\sum_{k_2=0}^{k_1}\frac{1}{2^{k_1-k_2}}
\sum_{k_3=0}^{k_2}\frac{1}{3^{k_2-k_3}} ...\sum_{k_{n}=0}^{k_{n-1}}\frac{1}{n^{k_{n-1}-k_{n}}}\sum_{k_{n+1}=0}^{k_{n}}\frac{1}{(n+1)^{k_{n+1}}}.
 \end{equation*}
 by identification of the last  equation and (\ref{generatp}), we obtain (\ref{desired-equ}).
\end{proof}
\section{The  generalized $q$-Stirling numbers of the second kind }
Throughout this section, we assume that $0<q<1$ and  we note
$$[x]_q=\frac{1-q^x}{1-q},\quad x\in \mathbb{C}.$$ For a complex number a, the $q$-shifted factorials are defined by
\begin{equation*}
(a;q)_n :=
\begin{cases}
1, & \text{if } n = 0, \\
\prod_{k=0}^{n-1} (1 - aq^k), & \text{if } n \in \mathbb{N}.
\end{cases}
\end{equation*}

\begin{equation*}
 (a;q)_\infty :=  \displaystyle{\prod_{k=0}^{\infty}} (1 - aq^k) 
\end{equation*}
The $q$-factorials are defined by
\begin{equation*}
 [n]_q! :=
\begin{cases}
1, & \text{if } n = 0, \\
[n]_q [n - 1]_q \cdots [2]_q [1]_q, & \text{if } n \in \mathbb{N},
\end{cases}   
\end{equation*}
and  hence the $q$-binomial coefficient is given by
\begin{equation*}
  \begin{bmatrix} n \\ k \end{bmatrix}_q :=
\frac{[n]_q!}{[n - k]_q! \, [k]_q!}.  
\end{equation*}
Using the $q$-difference operators $\Delta^{k}$ defined by  the recurrence relation (see \cite{Srivastava2011ZetaAQ})
\begin{equation}\label{qdiff}
 \Delta^0 f(x)=f(x),\;\;\Delta^{k+1} f(x)=\Delta^{k} f(x+1)-q^k\Delta^{k} f(x),\;\;k=1,2,..., 
\end{equation}
one may define  the  generalized $q$-Stirling numbers of the second kind as follow
\begin{equation*}
S_q(n,p)= \frac{(-1)^{n+p}}{[n]_q!}\Delta^{n}\left(\left([x-n-1]_q\right)^{-p} \right)_{x=0},\;\;n,p=0,1,2,....
\end{equation*}
The representation of the $q$-difference operators $\Delta^{n}$ 
\begin{equation*}
\Delta^{n}f(x)= \displaystyle\sum_{k=0}^{n}(-1)^{k}q^{\binom{k}{2}}\left[\begin{array}{l}
n  \\
k
\end{array}\right]_{q}f(x+n-k),\;\;n=0,1,2,....
\end{equation*}
 leads to an explicit formula for the  generalized $q$-Stirling numbers of the second kind:
\begin{equation}\label{stir-expl}
S_q(n,p)=\frac{(-1)^n}{[n]_q!}\displaystyle\sum_{k=0}^{n}\frac{(-1)^{k}q^{\binom{k}{2}+p(k+1)}\left[\begin{array}{l}
n  \\
k
\end{array}\right]_{q}}{\left([k+1]_q\right)^p},\;\;n,p=0,1,2,....
\end{equation}

A $q$-analogue of the  exponential function  is defined by
\begin{equation*}
e_q(z):=\sum_{n=0}^{+\infty}\frac{z^n}{[n]_q!}=\frac{1}{((1-q)z;q)_\infty},\;\;|z|<1. 
\end{equation*}
The $q$-hypergeometric series ${}_{p} \Phi_{p}$ is defined by
\begin{equation*}
{}_{r}\Phi_{s}\left(a_1, \ldots, a_r ; b_1, \ldots, b_s ; q, z\right)=\sum_{k=0}^{+\infty}  \frac{(a_1;q)_k...(a_r;q)_k}{(b_1;q)_k...(b_s;q)_k}\left((-1)^kq^{\binom{k}{2}}\right)^{1+s-r}\frac{z^k}{(q;q)_k}.
\end{equation*}
\begin{theorem}\label{q-generat1} The  generating function of the generalized $q$-Stirling numbers of  second kind $S_q(n,p)$  is given by:
\begin{equation}\label{part1}
e_q({-q^{-p}t}){}{ }_{p} \Phi_{p}\left(q, \ldots, q ; q^2, \ldots, q^2 ; q, -(1-q)t\right)=\sum_{n=0}^{+\infty}
q^{-p(n+1)}S_q(n,p)t^{n},\;t\in\mathbb{R}.
\end{equation}
\end{theorem} 
\begin{proof} 

Expanding both functions of the left hand side of
(\ref{part1}) into their power series, 
we obtain:
\begin{eqnarray*}
e_q({-q^{-p}t}) {}_{p} \Phi_{p}\left(q, \ldots, q ; q^2, \ldots, q^2 ; q, -(1-q)t\right)
  &=&  \sum_{n=0}^{+\infty}\frac{(-1)^nq^{-pn}t^n}{[n]_q!}\\  &&\times\sum_{k=0}^{\infty}q^{\binom{k}{2}}  
  \frac{\left(q ; q\right)_{k}... \left(q ; q\right)_{k}}{\left(q^2; q\right)_{k} ...\left(q^2 ; q\right)_{k}} \frac{(1-q)^kt^{k}}{(q ; q)_{k}}  \\
  &=&  \sum_{n=0}^{+\infty}\frac{(-1)^nq^{-pn}}{[n]_q!}\sum_{k=0}^{+\infty}\frac{q^{\binom{k}{2}} t^{n+k}}{[k]_q!\left([k+1]_q\right)^p} \\
&=&  \sum_{n=0}^{+\infty}q^{-pn}t^n \sum_{k=0}^{n}\frac{(-1)^{n-k}q^{\binom{k}{2}+pk} }{[n-k]_q![k]_q!\left([k+1]_q\right)^p} \\
&=&  \sum_{n=0}^{+\infty}q^{-p(n+1)}t^n \sum_{k=0}^{n}\frac{(-1)^{n-k}q^{\binom{k}{2}+p(k+1)} }{[n-k]_q![k]_q!\left([k+1]_q\right)^p} \\
 &=& \sum_{n=0}^{+\infty}q^{-p(n+1)}S_q(n,p)t^n.
\end{eqnarray*}
\end{proof}

By using the fact that
\begin{align*}
& \left(q^{-n} ; q\right)_{k}=\frac{(q ; q)_{n}}{(q ; q)_{n-k}}(-1)^{k} q^{\binom{k}{2}-n k} \quad(n, k \in \mathbb{Z}),  
\end{align*}
we have the following representation of the generalized Stirling numbers of the second kind in terms of
$q$-hypergeometric series:

\begin{lemma}
The $S_q(n,p)$ numbers
can be expressed as:
\begin{equation*}
S_q(n,p)=\frac{(-1)^n }{[n]_q!} {}{ }_{p+1} \Phi_{p}\left(q^{-n},q, \ldots, q ; q^2, \ldots, q^2 ; q, q^{n+p+1}\right).   
\end{equation*}
\end{lemma}
Our aim now is to give a relation between q-generalized Stirling numbers and the function given by 
\begin{equation}\label{gn-q}
  g_{n,q}(x)=\dfrac{(-1)^n}{(qx;q)_{n+1}}.  
\end{equation}
Thus, we need the following Lemma.
\begin{lemma}
The function $g_{n,q}$ 
can be expressed as
\begin{equation}\label{g_{n,q}0}
  g_{n,q}(x)=\frac{(-1)^{n+1}q^{-1}}{(q;q)_{n}}\displaystyle\sum_{k=0}^{n}\frac{(-1)^{k}q^{\binom{k}{2}}\left[\begin{array}{l}
n  \\
k
\end{array}\right]_{q}}{\left(x-q^{-k-1}\right)}.
\end{equation}
\end{lemma}
\begin{proof}
We have:
\begin{align*}
  g_{n,q}(x)&=\frac{(-1)^n}{(1-qx)(1-q^{2}x)...(1-q^{n+1}x)}\\
  &=(-1)^n\sum_{k=0}^{n}\frac{a_k}{x-q^{-k-1}}
\end{align*}
where 
\begin{align*}
a_k&= \displaystyle{\lim_{x\rightarrow q^{-k-1}}\frac{x-q^{-k-1}}{(1-qx)(1-q^{2}x)...(1-q^{n+1}x)}}\\
&=\frac{-q^{-k-1}}{(1-q^{-k})(1-q^{-k+1})...(1-q^{-1})(1-q)...(1-q^{n-k})}\\
&=\frac{(-1)^{k+1}q^{-k-1}q^{\binom{k+1}{2}}}{(q;q)_{k}(q;q)_{n-k}}\\
\end{align*}
\end{proof}



\begin{lemma}
We have:
\begin{equation}\label{derivp}
  S_q(n,p)=\frac{q(1-q)^{n+p}}{(p-1)!}g_{n,q}^{(p-1)}(1)
\end{equation}
\end{lemma}
\begin{proof}
Differentiating both sides of (\ref{g_{n,q}0}) $(p-1)$ times and taking $x=1,$ we obtain

\begin{align*}
  g_{n,q}^{(p-1)}(1) &=\frac{(-1)^{n+p}q^{-1}(p-1)!}{(q;q)_{n}}\displaystyle\sum_{k=0}^{n}\frac{(-1)^{k}q^{\binom{k}{2}}\left[\begin{array}{l}
n  \\
k
\end{array}\right]_{q}}{\left(1-q^{-k-1}\right)^p}\\
&=\frac{(-1)^{n}q^{-1}(p-1)!}{(q;q)_{n}}\displaystyle\sum_{k=0}^{n}\frac{(-1)^{k}q^{\binom{k}{2}}q^{p(k+1)}\left[\begin{array}{l}
n  \\
k
\end{array}\right]_{q}}{\left(1-q^{k+1}\right)^p}\\
&=\frac{q^{-1}(p-1)!}{(1-q)^{n+p}}S_q(n,p).
\end{align*}
\end{proof}
\begin{theorem}
The generalized $q$-Stirling numbers  of the second kind
can be defined by the following 'vertical' generating function:
\begin{equation}\label{generatp-q}
  \frac{(-1)^n}{(q(1+t);q)_{n+1}}=\frac{q^{-1}}{(1-q)^{n+1}}\sum_{p=0}^{+\infty}\frac{S_q(n,p+1)}{(1-q)^p}t^p.
\end{equation}
\end{theorem}
\begin{proof} To prove (\ref{generatp-q}), we write $g_{n,q}(x) $ as a power
series in $(x-1)$. We use Taylor's theorem to write
\begin{equation*}
g_{n,q}(x)=  \sum_{p=0}^{+\infty}\frac{g_{n,q}^{(p)}(1)}{p!}(x-1)^p.
\end{equation*}
Hence, from equation (\ref{derivp}), we have
\begin{equation}\label{gn-stirq}
g_{n,q}(x)=  \frac{q^{-1}}{(1-q)^{n+1}}\sum_{p=0}^{+\infty}S_q(n,p+1)\left(\frac{x-1}{1-q}\right)^p.
\end{equation}
Putting $t=x-1$ in (\ref{gn-stirq}), we get (\ref{generatp-q}).
\end{proof}
\begin{remark}
Take $x=q$ in (\ref{gn-stirq}), we get
\begin{equation}\label{q2stir}
   \frac{1}{[n+2]_q!}=q^{-1}\sum_{p=0}^{+\infty}(-1)^{n+p}S_q(n,p+1). 
  \end{equation}
\end{remark}
\begin{theorem}
The generalized $q$-Stirling numbers of the second kind satisfy the recurrence relation
\begin{equation}\label{recq}
 [n+1]_q S_q(n,p+1)=q^{n+1}S_q(n,p)-S_q(n-1,p+1);\;n=1,2,3,...;\; p=0,1,2,...,
\end{equation}  
with   
 $$ S_q(n,0)=0,\;\; \;\ \; S_q(0,p)= q^p,
$$
\end{theorem}

\begin{proof}


Using the fact that 
\begin{equation*}
\frac{(-1)^{n}}{(q(1+t);q)_{n+1}} =-(1-(1+t)q^{n+2})\frac{(-1)^{n+1}}{(q(1+t);q)_{n+2}}
\end{equation*}
and by (\ref{generatp-q}), we have
\begin{equation*}
(1-q)\sum_{p=0}^{+\infty}\frac{S_q(n,p+1)}{(1-q)^p}t^p=  -(1-(1+t)q^{n+2})\sum_{p=0}^{+\infty}\frac{S_q(n+1,p+1)}{(1-q)^p}t^p,
\end{equation*}
from which, we deduce
\begin{eqnarray*}
\sum_{p=0}^{+\infty}\frac{(1-q)S_q(n,p+1)+(1-q^{n+2})S_q(n+1,p+1)}{(1-q)^p}t^p  &=&tq^{n+2}\sum_{p=0}^{+\infty}\frac{S_q(n+1,p+1)}{(1-q)^p}t^p \\
   &=& q^{n+2}(1-q)\sum_{p=0}^{+\infty}\frac{S_q(n+1,p+1)}{(1-q)^{p+1}}t^{p+1} \\
   &=& q^{n+2}(1-q)\sum_{p=1}^{+\infty}\frac{S_q(n+1,p)}{(1-q)^{p}}t^{p}.
\end{eqnarray*}
Consequently, we have
\begin{equation*}
 S_q(n,p+1)+[n+2]_qS_q(n+1,p+1)=q^{n+2} S_q(n+1,p).
\end{equation*}
\end{proof}

\subsection{A Generalized $q$-Zeta Function}
We   define  the $q$-Zeta function $\zeta_q(s)$ in a slightly different way to the definition given in \cite{Srivastava2011ZetaAQ}. Let 
\begin{equation}\label{def-qHurwitz}
\zeta_q(s):=  \sum_{k=0}^{+\infty}\frac{q^{k(s-1)}}{\left([k+1]_q\right)^s}.
\end{equation}
In the case $s=p=2,3,...$, the $q$-Zeta values $\zeta_q(p)$ are  given by  $q$-hypergeometric series:
\begin{equation}\label{qzeta-hyp}
   \zeta_q(p) = {}{ }_{p+1} \Phi_{p}\left(q,q, \ldots, q ; q^{2}, \ldots, q^{2} ; q, q^{p-1}\right). 
\end{equation}
The  $q$-integral of $f(t)$ on $[0, \infty)$ is defined by
\begin{align*}
\int_{0}^{\infty} f(t) d_{q} t:
& =(1-q) \sum_{j=-\infty}^{\infty} f\left(q^{j}\right) q^{j} \quad(0<q<1)
\end{align*}
We have (see \cite{mezlini1}, p. 24, Lemma 5.1)
\begin{lemma}
For $\lambda > 0$ and $s \geq 1$, we have
\begin{equation}\label{q-euler-intg}
 \int_{0}^{\infty} e_q(-\frac{\lambda}{1-q} t)t^{s-1} d_{q} t  =\frac{(1-q)q^{-\binom{s}{2}}\left(q ; q\right)_{s-1}}{\lambda^s} .
\end{equation}
\end{lemma}
From which the following $q-$integral can be obtained:
\begin{proposition}
For $\lambda > 0$ such that  $\lambda q > 1$, we have
  \begin{equation}\label{q-laplace}
\int_{0}^{\infty}  e_q(-\frac{\lambda}{1-q} t) {}{ }_{p} \Phi_{p}\left(q, \ldots, q ; q^2, \ldots, q^2 ; q, -t\right) d_{q} t=\frac{1-q}{\lambda} {}{ }_{p+1} \Phi_{p}\left(q, \ldots, q ; q^2, \ldots, q^2 ; q, \frac{1}{q\lambda}\right). 
  \end{equation}  
\end{proposition}
\begin{theorem}
 The $q$-integral representation of the $q$-zeta function is given by:
 \begin{equation}\label{q-int-zeta}
\zeta_q(p)=\frac{1}{q^{p}(1-q)}\int_{0}^{\infty}  e_q(-\frac{q^{-p}}{1-q} t) {}{ }_{p} \Phi_{p}\left(q, \ldots, q ; q^2, \ldots, q^2 ; q, -t\right) d_{q} t.
  \end{equation}
 \end{theorem}
\begin{proof} 
Take $\lambda=q^{-p}$ in (\ref{q-laplace}) and by (\ref{qzeta-hyp}), we deduce that
\begin{equation*}
\int_{0}^{\infty}  e_q\left(-\frac{q^{-p}}{1-q} t\right) {}{ }_{p} \Phi_{p}\left(q, \ldots, q ; q^2, \ldots, q^2 ; q, -t\right) d_{q} t=q^{p}(1-q){}{ }_{p+1} \Phi_{p}\left(q, \ldots, q ; q^2, \ldots, q^2 ; q,q^{p-1}\right) .
  \end{equation*} 
 \end{proof} 

\begin{proposition}
Let $p \geq 2$ be an integer then: 
\begin{equation}\label{herwaux}
 \zeta_q(p)=(1-q)^p \sum_{n=0}^{+\infty}\binom{n+p-1}{p-1}\frac{q^{n}}{1-q^{n+p-1}}.
\end{equation}
\end{proposition}
\begin{proof}

 It follows immediately from the binomial Theorem that:
 \begin{equation*}
 \frac{1}{(1-x)^p}= \sum_{n=0}^{+\infty}\binom{n+p-1}{p-1}x^{n},
\end{equation*}
 Hence,
 \begin{eqnarray*}
   \zeta_q(p) &=& (1-q)^p  \sum_{k=0}^{+\infty}\frac{q^{k(p-1)}}{\left(1-q^{k+1}\right)^p} \\
    &=& (1-q)^p \sum_{k=0}^{+\infty}q^{k(p-1)}\sum_{n=0}^{+\infty}\binom{n+p-1}{p-1}q^{n(k+1)} \\
    &=&(1-q)^p \sum_{n=0}^{+\infty}\binom{n+p-1}{p-1}q^{n}\sum_{k=0}^{+\infty}q^{k(n+p-1)}\\
    &=& (1-q)^p\sum_{n=0}^{+\infty}\binom{n+p-1}{p-1}\frac{q^{n}}{1-q^{n+p-1}}. 
 \end{eqnarray*}
\end{proof}

\begin{theorem} For $p=3,4,...$ we have
\begin{equation}\label{zetaq15}
 \zeta_q(p) = (1-q)^{p-1}\sum_{n=0}^{+\infty}(-1)^{n}\left(\frac{1-q^{p-2}}{1-q}\right)^{n}\sum_{k=0}^{n}\binom{k+p-1}{p-1}\frac{q^{k-1}(q^{p-1};q)_{k}}{(1-q^{p-2})^k}S_q(k,n-k+1). 
\end{equation}
\end{theorem}
 \begin{proof}
 Remark firstly that by (\ref{gn-q}) we have
 $$\frac{1}{1-q^{n+p-1}}=\frac{(q^{p-1};q)_{n}}{(q^{p-1};q)_{n+1}}=(q^{p-1};q)_{n}(-1)^ng_{n,q}(q^{p-2}).$$

Consequently, using (\ref{herwaux}) together with the previous equalities and  (\ref{gn-stirq}), we obtain
\begin{eqnarray*}
  \zeta_q(p) &=&
  (1-q)^p\sum_{n=0}^{+\infty}\binom{n+p-1}{p-1}q^{n}(q^{p-1};q)_{n}(-1)^ng_{n,q}(q^{p-2})\\
    &=&(1-q)^p\sum_{n=0}^{+\infty}\binom{n+p-1}{p-1}q^{n}(q^{p-1};q)_{n}(-1)^{n}\frac{q^{-1}}{(1-q)^{n+1}}\sum_{k=0}^{+\infty}S_q(n,k+1)\left(\frac{q^{p-2}-1}{1-q}\right)^k \\
   &=&(1-q)^p\sum_{n=0}^{+\infty}(-1)^{n}\sum_{k=0}^{n}\binom{n-k+p-1}{p-1}\frac{q^{n-k-1}(q^{p-1};q)_{n-k}}{(1-q)^{n-k+1}}S_q(n-k,k+1)\left(\frac{1-q^{p-2}}{1-q}\right)^{k} \\ 
   &=&(1-q)^p\sum_{n=0}^{+\infty}(-1)^{n}\sum_{k=0}^{n}\binom{k+p-1}{p-1}\frac{q^{k-1}(q^{p-1};q)_{k}}{(1-q)^{k+1}}S_q(k,n-k+1)\left(\frac{1-q^{p-2}}{1-q}\right)^{n-k} \\
    &=&(1-q)^{p-1}\sum_{n=0}^{+\infty}(-1)^{n}\left(\frac{1-q^{p-2}}{1-q}\right)^{n}\sum_{k=0}^{n}\binom{k+p-1}{p-1}\frac{q^{k-1}(q^{p-1};q)_{k}}{(1-q^{p-2})^k}S_q(k,n-k+1), \\     
 \end{eqnarray*}
which complete the proof of (\ref{zetaq15})
\end{proof}

\bibliographystyle{alpha}
\bibliography{sample}

\end{document}